    \newcommand{\BA}{{\mathbb {A}}} 
    \newcommand{\BC}{{\mathbb {C}}} 
     \newcommand{\BH}{{\mathbb {H}}}
     \newcommand{\BN}{{\mathbb {N}}}
    \newcommand{\BQ}{{\mathbb {Q}}} \newcommand{\BR}{{\mathbb {R}}}
     \newcommand{\BZ}{{\mathbb {Z}}}
    \newcommand{\fa}{{\mathfrak{a}}} 
     \newcommand{\ff}{{\mathfrak{f}}}
     \newcommand{\fh}{{\mathfrak{h}}}
     \newcommand{\fp}{{\mathfrak{p}}}
    \newcommand{\fq}{{\mathfrak{q}}}
     \newcommand{\fz}{{\mathfrak{z}}}
    \newcommand{\ab}{{\mathrm{ab}}}
    \newcommand{\corank}{{\mathrm{corank}}}
    \newcommand{\Cl}{{\mathrm{Cl}}}
    \newcommand{\Gal}{{\mathrm{Gal}}} \newcommand{\GL}{{\mathrm{GL}}}
    \newcommand{\length}{{\mathrm{length}}}
    \newcommand{\loc}{{\mathrm{loc}}}
    \newcommand{\ord}{{\mathrm{ord}}} \newcommand{\rank}{{\mathrm{rank}}}
    \renewcommand{\mod}{\ \mathrm{mod}\ }
    \newcommand{\Sel}{{\mathrm{Sel}}} 
    \newcommand{\st}{{\mathrm{st}}}
    \font\cyr=wncyr10
    \newcommand{\Sha}{\hbox{\cyr X}}
    \newcommand{\ov}{\overline}
    \newcommand{\ra}{\rightarrow}
    \theoremstyle{plain}
    \newtheorem{thm}{Theorem}[section] 
      \newtheorem{prop}[thm]{Proposition}
\theoremstyle{remark} \newtheorem{remark}[thm]{Remark}
\theoremstyle{remark} 
\theoremstyle{remark} 
    \newcommand{\cO}{\mathcal O}
    \numberwithin{equation}{section}
\begin{document}
\title{A rank zero $p$-converse to a theorem of Gross--Zagier, Kolyvagin and Rubin}

\author{Ashay A. Burungale
%, Christopher Skinner 
and Ye Tian}

\address{Ashay A. Burungale:  California Institute of Technology,
1200 E California Blvd, Pasadena CA 91125, 
and The University of Texas at Austin, 
2515 Speedway, Austin TX 78712} 
\email{ashayburungale@gmail.com}

\address{Ye Tian:  School of Mathematical Sciences, University of Chinese Academy of Sciences, Beijing 10049, China.  MCM, HLM, Academy of Mathematics and Systems
Science, Chinese Academy of
Sciences, Beijing 100190, China} \email{ytian@math.ac.cn}

\maketitle
\begin{abstract} 
Let $E$ be a CM elliptic curve defined over $\BQ$ and $p$ a prime. We show that 
$$
\corank_{\BZ_{p}} \Sel_{p^{\infty}}(E_{/\BQ})=0 \implies \ord_{s=1}L(s,E_{/\BQ})=0
$$
for the $p^{\infty}$-Selmer group $\Sel_{p^{\infty}}(E_{/\BQ})$ and 
the complex $L$-function $L(s,E_{/\BQ})$. 
Along with Smith's work 
on the distribution of $2^\infty$-Selmer groups,
 %on the congruent number problem, 
 this leads to the first instance 
of the even parity Goldfeld conjecture:  
%Let $E: y^{2}=x^{3}-x$ be the congruent number elliptic curve. 
For $50\%$ of the positive square-free integers $n$, we have 
$
\ord_{s=1}L(s,E^{(n)}_{/\BQ})=0, 
$
where $E^{(n)}: ny^{2}=x^{3}-x $ is a quadratic twist of the congruent number elliptic curve $E: y^{2}=x^{3}-x$.
% and 
%$L(s,E^{(n)}_{/\BQ})$ the complex L-function with $s\in\BC$.
\end{abstract}

\tableofcontents
\section{Introduction}
\noindent A $p$-converse to a theorem of Gross--Zagier, Kolyvagin and Rubin 
was recently proved \cite{BuTi} for CM elliptic curves in the rank one case for primes $p$ of good ordinary reduction. In this article we prove such a $p$-converse theorem for CM curves in the rank zero case for any prime $p$. 
As a consequence of the $2$-converse and Smith's work \cite{Sm2},
% on the distribution of Selmer groups, 
 we prove the first instance of the even parity Goldfeld conjecture. 

\subsubsection{$p$-converse} 
%\subsubsection{Results} 
Let $E$ be an elliptic curve defined over $\BQ$. 
%Then $E(\BQ)$ is a finitely generated abelian group. 
%Let $E(\BQ)$ be the finitely generated Mordell--Weil group and $\Sha(E)$ the conjecturally finite Tate--Shafarevich group. 
For a prime $p$, 
%we have 
the $p^\infty$-Selmer group $\Sel_{p^{\infty}}(E_{/\BQ})$ encodes the arithmetic of $E$ 
%of the elliptic curve 
via the exact sequence 
\label{Sel}
$$
0 \ra E(\BQ) \otimes_{\BZ} \BQ_{p}/\BZ_{p} \ra \Sel_{p^{\infty}}(E_{/\BQ}) \ra \Sha(E_{/\BQ})[p^{\infty}] \ra 0.
$$ 
Let $L(s,E_{/\BQ})$ be the associated Hasse--Weil $L$-function.
% corresponding to the elliptic curve $E_{/\BQ}$ with $s \in \BC$. 
The Birch and Swinnerton-Dyer (BSD) conjecture connects the arithmetic of $E$ to the analytic properties of $L(s,E_{/\BQ})$. 

%We consider the following typical instance.

%\begin{conj}\label{BSD} 
%Let $E$ be an elliptic curve over $\BQ$. For $r\in\{0,1\}$, the following are equivalent. 
%\begin{itemize}
%\item[(a)] $\rank_{\BZ}E(\BQ)=r$ and $\Sha(E_{/\BQ})$ is finite.
%\item[(b)] $\corank_{\BZ_{p}} \Sel_{p^{\infty}}(E_{/\BQ})=r$ for a prime $p$.
%\item[(c)] $\ord_{s=1}L(s,E_{/\BQ})=r$. 
%\end{itemize}
%\end{conj}

%Part (b) evidently follows from part (a).
% On the other hand, 
 The implication 
$$
\ord_{s=1}L(s,E_{/\BQ})=r \leq 1\implies \rank_{\BZ}E(\BQ)=r, \#\Sha(E_{/\BQ}) < \infty
$$
is a fundamental result towards the BSD conjecture due to Coates--Wiles \cite{CoWi}, Gross--Zagier \cite{GZ}, Kolyvagin \cite{Ko} and Rubin \cite{Ru0}. 
%This passage from the analytic to arithmetic facet goes back to the mid 70's and mid 80's. 
%In the case $r=0$ the finiteness of the Mordell--Weil group for CM elliptic curves is due to Coates and Wiles around the mid 70's. 
%In the case $r \leq 1$ 
%and non-CM elliptic curves (with $r=0,1$), 
%the implication is due to 
%Gross--Zagier and Kolyvagin for any elliptic curve, and Rubin for CM curves, around the mid 80's. 
%This is the theorem alluded to in the title.
%In this article 
We refer to the implication 
\begin{equation}\label{p-cv}
\corank_{\BZ_{p}} \Sel_{p^{\infty}}(E_{/\BQ})=r \implies \ord_{s=1}L(s,E_{/\BQ})=r
\end{equation}
as the rank $r$ $p$-converse theorem. 
In the early 90's 
Rubin's seminal work \cite{Ru} led to the rank zero $p$-converse theorem for CM elliptic curves $E_{/\BQ}$ and primes 
\begin{equation}\label{sp}
p \nmid \# \cO_{K}^{\times}
\end{equation}
for $K$ the CM field.
%This is a $p$-adic criteria for an elliptic curve to have analytic rank $r$. 
More recently, the $p$-converse has been revived by Skinner \cite{Sk} and Zhang \cite{Zh}.
%A brief overview of $p$-converse theorems is given in \cite[\S1]{BuTi}. 
%Here we mention that 

%The main result of the article is a $p$-converse theorem in the case of CM elliptic curves. 
Our main result is the following.
\begin{thm}\label{ThmC}
Let $E$ be an elliptic curve defined over an imaginary quadratic field $K$, with complex multiplication by an order of $K$. Let $p$ be a prime.
% and $\fp$ a prime of $K$ above $p$. 
Then 
$$
\corank_{\BZ_{p}}\Sel_{p^{\infty}}(E_{/K})=0 \implies \ord_{s=1}L(s,E_{/K})=0.
$$
In particular, if $E$ descends to $\BQ$, then \eqref{p-cv} holds. 
\end{thm}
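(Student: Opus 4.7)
After an isogeny, one may assume $E$ has CM by the full ring of integers $\cO_{K}$; both the $\BZ_{p}$-corank of $\Sel_{p^{\infty}}(E_{/K})$ and $\ord_{s=1}L(s, E_{/K})$ are isogeny invariants. Let $\psi$ denote the Hecke character of $K$ of infinity type $(1,0)$ attached to $E/K$ by the theory of complex multiplication, so that $L(s, E_{/K}) = L(s, \psi)$ and $V_{p}(E)$ as a $G_{K}$-representation coincides with the $p$-adic realization of $\psi$. Under this identification, $\Sel_{p^{\infty}}(E_{/K})$ is identified with a Bloch--Kato Selmer group attached to $\psi$.

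\textbf{Iwasawa-theoretic reformulation.} Let $K_{\infty}/K$ be the compositum of all $\BZ_{p}$-extensions of $K$, set $\Gamma = \Gal(K_{\infty}/K)$ and $\Lambda = \BZ_{p}[[\Gamma]]$, and let $X$ denote the Pontryagin dual of $\Sel_{p^{\infty}}(\psi/K_{\infty})$, a finitely generated $\Lambda$-module. A standard control theorem converts the hypothesis $\corank_{\BZ_{p}}\Sel_{p^{\infty}}(E_{/K}) = 0$ into a statement that $X$ has small, explicitly controlled contribution at the trivial character of $\Gamma$. On the analytic side, the Katz--Yager construction produces a $p$-adic $L$-function $\CL_{p}(\psi) \in \Lambda$ (after a period normalization) whose values at characters of $\Gamma$ of suitable type interpolate $L(1, \psi\chi)$ up to explicit periods and Euler factors at $p$.

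\textbf{Main conjecture and conclusion.} The Iwasawa main conjecture for $\psi/K$ asserts that the characteristic ideal of $X$ is generated by $\CL_{p}(\psi)$. Combined with the control theorem, the corank-zero hypothesis then forces the image of $\CL_{p}(\psi)$ at the trivial character of $\Gamma$ to be nonzero, and the Katz--Yager interpolation formula delivers $L(1, E_{/K}) = L(1, \psi) \neq 0$, provided the interpolation Euler factor at $p$ is nonzero. The latter can be arranged by an appropriate choice of $p$-stabilization or verified by a direct local calculation.

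\textbf{Main obstacle.} The substance of the theorem beyond Rubin \cite{Ru} lies in handling the primes $p \in \{2, 3\}$ (which divide $\#\cO_{K}^{\times}$ for $K = \BQ(i)$ and $K = \BQ(\sqrt{-3})$) together with the non-ordinary case, in which $p$ is inert or ramified in $K$ and $E/K$ has supersingular reduction. I expect the hardest step to be establishing the relevant main conjecture divisibility at $p = 2$: Rubin's elliptic-unit Euler system argument does not directly apply, and one needs a substitute input --- either a refinement of the Euler system method valid at $p = 2$, or an anticyclotomic/Heegner-type Euler system combined with an explicit reciprocity law relating it to $\CL_{p}(\psi)$. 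A secondary difficulty is verifying that the $p$-adic interpolation factor at the trivial character does not vanish, ruling out trivial-zero phenomena that would obstruct the passage from the $p$-adic to the complex nonvanishing.
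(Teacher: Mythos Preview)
Your outline is essentially Rubin's argument, and you correctly flag where it breaks: at $p \mid \#\cO_K^\times$ and at non-ordinary primes. But the framework you set up cannot be pushed through those cases. The Katz--Yager $p$-adic $L$-function you invoke exists only when $p$ splits in $K$; for $p$ inert or ramified (in particular $p=2$ for $K=\BQ(i)$, the case driving Theorem~\ref{ThmA}) there is no such object interpolating the relevant critical values, so your ``Iwasawa-theoretic reformulation'' step has no content there --- the paper says as much in the introduction. Your proposed substitute inputs (a refined Euler system at $p=2$, or a Heegner-type input plus reciprocity) are not known to deliver the needed divisibility in this setting, so the sketch remains genuinely incomplete precisely at the primes that are the point of the theorem.

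The paper takes a different route that avoids $p$-adic $L$-functions entirely. It first passes from $E/K$ to the pair of CM newforms $f,\bar f$ over $\BQ$ via $L(s,E_{/K})=L(s,f)L(s,\bar f)$ and a matching Selmer decomposition, and then proves for each newform the rank-zero $p$-converse using Kato's main conjecture over the cyclotomic $\BZ_p$-tower of $\BQ$, formulated purely in terms of the Iwasawa cohomology $\BH^{i}(V_{F_\lambda}(f))$ and the Beilinson--Kato zeta element ${\bf z}_\gamma(f)$. The CM case of Kato's main conjecture (Theorem~\ref{KMC}) is deduced, via Kato's comparison of Beilinson--Kato elements with elliptic units, from the equivariant main conjecture for $K$ in $\BZ_p[\![G_{p^{\infty}\ff}]\!]\otimes\BQ$, which is known for \emph{all} primes $p$ by Johnson-Leung--Kings. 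Tensoring with $\BQ$ is the key maneuver: it sidesteps the integral Euler-system obstructions at $p=2,3$ that you identify, while still forcing the specialization $z$ of ${\bf z}_\gamma(f)$ to be a basis of $H^1(\BZ[1/p],T(k/2))\otimes\BQ$ once the Bloch--Kato Selmer group vanishes. Kato's explicit reciprocity law then gives $L(k/2,f)\neq 0$ directly --- no interpolation Euler factor, hence no trivial-zero issue, arises.
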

In the main text the $p$-converse is proved for any CM newform (cf. Theorem \ref{pcv}). 
%We would like to emphasise that the $p$-converse theorem is unconditional. 
%\begin{remark}\label{Rubcv}
%The hypothesis \eqref{sp} seems essential for the Euler system argument in \cite{Ru}, \cite{Ru2}.
%In Rubin's $p$-converse theorem \cite{Ru}, \cite{Ru2}
%the hypothesis $p\nmid \# \cO_{K}^{\times}$ 
%for $K$ the underlying CM field is perhaps 
%seems essential for the Euler system method.
% as utilised . 
%\end{remark}

\subsubsection{Goldfeld's conjecture} 
%The eponymous conjecture is a basic problem on the distribution of analytic ranks in the quadratic twist family of an elliptic curve over $\BQ$. 
Let $E$ be an elliptic curve defined over $\BQ$, and $E^{(n)}$ its quadratic twist associated to $n\in\BZ\setminus\{0\}$. In 1979 Goldfeld \cite{Gd} conjectured 
 that $\ord_{s=1}L(s,E^{(n)}_{/\BQ})=0$ for $50\%$ of the fundamental\footnote{Goldfeld also conjectured that $\ord_{s=1}L(s,E^{(n)}_{/\BQ})=1$ for $50\%$ of the fundamental discriminants $n$.} discriminants $n$ (the even parity Goldfeld conjecture). The first instance: 
\begin{thm}\label{ThmA}
The even parity Goldfeld conjecture is true for the congruent number elliptic curve 
$E:y^{2}=x^{3}-x$,  that is\footnote{Note that $\epsilon(E^{(n)})=+1$ precisely when $n \equiv 1,2,3 \mod{8}$ \cite{BS}.}, for a density one subset of the positive square-free integers $n \equiv 1,2,3 \mod{8}$,  
%we have 
$$
\ord_{s=1}L(s,E^{(n)}_{/\BQ})=0.
$$ 
%with $E^{(n)}_{/\BQ}$ being the corresponding quadratic twist \eqref{cong}.
\end{thm}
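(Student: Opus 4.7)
The strategy is to combine the $p$-converse theorem (Theorem~\ref{ThmC}) at the prime $p=2$ with Smith's distribution theorem \cite{Sm2} for $2^\infty$-Selmer groups in the family of quadratic twists of the congruent number curve. The deduction itself is formal; essentially all of the arithmetic content has already been packaged into the two inputs.

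First I would observe that the congruent number curve $E : y^2 = x^3 - x$ has CM by $\BZ[i]$, and every quadratic twist $E^{(n)} : ny^2 = x^3 - x$ inherits CM by the same order of $K = \BQ(i)$. Since each $E^{(n)}$ is defined over $\BQ$, Theorem~\ref{ThmC} applies at $p = 2$ and yields
$$
\corank_{\BZ_2}\Sel_{2^\infty}(E^{(n)}_{/\BQ}) = 0 \implies \ord_{s=1}L(s, E^{(n)}_{/\BQ}) = 0
$$
for every such $n$. The residue classes $n \equiv 1, 2, 3 \pmod 8$ are exactly those with sign $\epsilon(E^{(n)}) = +1$, so Theorem~\ref{ThmA} asserts that the right-hand side of the implication holds for a density-one subset of positive square-free $n$ on the even-parity side.

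Next I would invoke Smith's theorem. Its hypotheses apply to the family $\{E^{(n)}\}$, since $E[2] = \{O, (0,0), (\pm 1, 0)\}$ is fully $\BQ$-rational. Smith's result asserts that for $100\%$ of positive square-free $n \equiv 1, 2, 3 \pmod 8$, the Selmer group $\Sel_{2^\infty}(E^{(n)}_{/\BQ})$ has $\BZ_2$-corank zero. Combining this with the $p$-converse from the previous step immediately gives the conclusion of Theorem~\ref{ThmA}.

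The main obstacle is concentrated entirely in proving Theorem~\ref{ThmC} at the prime $p = 2$. Indeed $2 \mid \#\cO_K^\times$ for $K = \BQ(i)$, so the condition~\eqref{sp} fails and Rubin's classical CM $p$-converse argument does not apply; neither can one replace $p = 2$ by a larger prime, because Smith's distribution result is inherently about $2^\infty$-Selmer groups. Extending the rank-zero CM $p$-converse to such exceptional primes is the novel content of the paper and constitutes the bulk of what remains; once Theorem~\ref{ThmC} is established at $p = 2$, the derivation of Theorem~\ref{ThmA} is a one-line combination with \cite{Sm2}.
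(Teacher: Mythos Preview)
Your proposal is correct and matches the paper's own proof exactly: the paper derives Theorem~\ref{ThmA} in one line as a consequence of Smith's result (stated there as Theorem~\ref{ThmB}) and the $2$-converse Theorem~\ref{ThmC}. Your additional remarks about CM by $\BZ[i]$, the failure of~\eqref{sp} at $p=2$, and why Smith's input forces $p=2$ are accurate context but not needed for the deduction itself.
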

%In this article, 

  For perspectives on the 
  %Goldfeld conjecture, more generally on the 
  distribution of analytic ranks of elliptic curves, one may refer to Katz and Sarnak \cite[\S4]{KS}.
%Conjecture \ref{Gd} has a complex analytic appearance and may thus seem amenable to techniques from analytic number theory. 
Under the GRH, Heath-Brown  \cite{HB2} proved the non-vanishing as in Theorem \ref{ThmA} for a positive proportion of the twists.
% of the fundamental discriminants. 
%Curiously, all known unconditional results showing such a non-vanishing are based on the distribution of $m$-parts of ideal class groups of quadratic number fields for $m=2, 3, 4$  and the arithmetic of modular forms. 
%For the congruent number curves 
Smith \cite{Sm1} proved that 
%the non-vanishing holds 
$
\ord_{s=1}L(s,E^{(n)}_{/\BQ})=0
$
for at least $41.9\%$ of the positive square-free integers $n \equiv 1,2,3 \mod{8}$. The mod $2$ method \cite{Sm1} builds on  
\cite{Ti}, \cite{TYZ}, \cite{HB1}.

%Our evidence towards the Goldfeld conjecture \cite[Conj. 7.12]{KMR} over 
%imaginary 
%quadratic fields:
%Our approach also provides an evidence towards the analogue \cite[Conj. 7.12]{KMR} of the Goldfeld conjecture  over imaginary quadratic fields.
% (cf. Remark \ref{Ex}). 
Our approach to Theorem \ref{ThmA} also yields the following. 
\begin{prop}\label{Gdiq-I}
Let $E$ be an elliptic curve defined over an imaginary quadratic field $K$, with CM by an order  
of $K$. Suppose that $3$ is not inert in $K$. 
Then for at least $50\%$ of $t \in K^{\times}/(K^{\times})^{2}$, 
%we have
$$
\ord_{s=1}L(s,E^{(t)}_{/K})=0.
$$
%with $E^{(t)}_{/K}$ being the corresponding quadratic twist.
\end{prop}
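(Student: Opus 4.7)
The plan is to combine Theorem \ref{ThmC}, applied at $p=2$, with a distribution result on $2^{\infty}$-Selmer groups in the style of Smith \cite{Sm2}. Since each quadratic twist $E^{(t)}_{/K}$ still has complex multiplication by the given order of $K$, Theorem \ref{ThmC} reduces the proposition to showing that $\corank_{\BZ_{2}}\Sel_{2^{\infty}}(E^{(t)}_{/K})=0$ for at least $50\%$ of $t \in K^{\times}/(K^{\times})^{2}$. The problem thus becomes purely algebraic.

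For the algebraic statement, one orders the family $\{E^{(t)}\}$ by a suitable height on $K^{\times}/(K^{\times})^{2}$ and carries out a $\phi$-descent, where $\phi$ is a $K$-rational $2$-isogeny supplied by the CM structure. Following Smith, the $\phi$- and $\widehat{\phi}$-Selmer groups, and hence the $2^{\infty}$-Selmer corank, are governed through the family by a sequence of Cassels-type pairings on mod-$2$ Selmer data. An inductive Ramsey-theoretic analysis then shows that, among twists with global root number $+1$, the $2^{\infty}$-Selmer corank vanishes with density one. Since exactly $50\%$ of $t$ in the $K$-twist family give root number $+1$, this furnishes the required $50\%$ lower bound.

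The principal obstacle is faithfully transferring Smith's machinery from $\BQ$ to the imaginary quadratic base field $K$. One must (i) formulate the CM-compatible $\phi$-descent over $K$ and handle the local conditions at primes of $K$ above $2$; (ii) run the graph-theoretic and Chebotarev inputs of Smith's inductive mod-$2$ argument over $K$; and (iii) verify the $50/50$ root-number distribution in the $K$-family. The hypothesis that $3$ is not inert in $K$ enters as the genericity assumption needed for (ii): it guarantees a sufficient supply of primes of $K$ above $3$ of residue degree one to drive the auxiliary mod-$2$ conditions, playing a role analogous to that of odd primes in prescribed residue classes in the original $\BQ$-setting. Granting these technical inputs, the $p=2$ converse then immediately upgrades the algebraic corank statement to the analytic conclusion of the proposition.
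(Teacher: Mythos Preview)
Your approach has a genuine gap: you have chosen the wrong prime and, with it, the wrong Selmer input. The paper applies Theorem~\ref{ThmC} at $p=3$, not $p=2$, and the algebraic input is the already-established result of Bhargava--Klagsbrun--Lemke Oliver--Shnidman \cite[Thm.~2.7]{BKLS} that $\corank_{\BZ_{3}}\Sel_{3^{\infty}}(E^{(t)}_{/K})=0$ for at least $50\%$ of $t$. The hypothesis that $3$ is not inert in $K$ is exactly what produces a $K$-rational degree-$3$ isogeny on $E$ (via an ideal of norm $3$ in the CM order), which is the structural input the $3$-isogeny descent of \cite{BKLS} requires; it has nothing to do with supplying auxiliary primes for a mod-$2$ argument.

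By contrast, your plan rests on transporting Smith's $2^{\infty}$-Selmer machinery from $\BQ$ to the imaginary quadratic base $K$. That is not in the literature and is widely regarded as a substantial open problem; ``granting these technical inputs'' is precisely the step that cannot be granted. Two further errors: first, in this family $\epsilon(E^{(t)}_{/K})=+1$ for \emph{every} $t$ (cf.\ the discussion around \cite[Conj.~7.12]{KMR} in the introduction), so your $50/50$ root-number split is false and cannot be the source of the $50\%$; second, a $K$-rational $2$-isogeny is not guaranteed by the hypotheses (it would require $2$ non-inert in $K$), so even the starting $\phi$-descent you propose need not exist. The fix is simply to switch to $p=3$ and invoke \cite{BKLS} together with the $3$-converse.
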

\noindent This is an evidence towards Goldfeld's conjecture over imaginary quadratic fields \cite[Conj. 7.12]{KMR}, 
which posits $\ord_{s=1}L(s,E^{(t)}_{/K})=0$
%the non-vanishing 
for a density one subset of $t\in K^{\times}/(K^{\times})^{2}$
since $\epsilon(E^{(t)}_{/K})=+1$ for any $t$. 

\subsubsection{About the proofs} 
%Our approach to 
Theorem \ref{ThmA} is based on a $2$-adic method. It is a consequence of the rank zero $2$-converse Theorem \ref{ThmC}, and the distribution of $\corank_{\BZ_{2}}\Sel_{2^{\infty}}(E^{(n)}_{/\BQ})$ for positive square-free $n\equiv 1,2,3 \mod{8}$ due to Smith \cite{Sm2} (cf. Theorem \ref{ThmB}). Note that the prime $p=2$ is fundamental to \cite{Sm2}, and is excluded by the results of \cite{Ru}, \cite{Ru2}.
%the congruent number curves have additive reduction at $2$. 

Our approach to the $p$-converse is based on Iwasawa theory of zeta elements  and is uniform for any prime $p$. 
For primes $p$ of non-ordinary reduction, the existence of a desirable $p$-adic $L$-function is still elusive. 
We consider main conjectures in terms of the $\GL_{1/K}$ and $\GL_{2/\BQ}$-zeta elements, namely elliptic units and Beilinson--Kato elements. 

Some of the following notation is not followed in the rest of the article. 
Let $T$ be the $p$-adic Tate module of $E_{/\BQ}$.
% $V=T \otimes_{\BZ_{p}} \BQ_{p}$ 
%and $H^{1}(\BQ,V)=H^{1}(G_{\BQ},V)$.
% for $G_{\BQ}$ the absolute Galois group. 
To recall Kato's main conjecture,
% for the strict Selmer group,
% associated to $E_{/\BQ}$, 
let $\BQ_{\infty}$ be the cyclotomic $\BZ_p$-extension of $\BQ$ and $\Lambda=\BZ_{p}[\![\Gal(\BQ_{\infty}/\BQ)]\!]$. %the corresponding Iwasawa algebra with $\BZ_{p}$-coefficients. 
Let $z_{E} \in H^{1}(\BQ,T)\otimes_{\BZ_{p}}\BQ_{p}$ be the associated Beilinson--Kato element.
For the Iwasawa cohomology $H^{1}(\BZ[\frac{1}{p}], T\otimes_{\BZ_{p}} \Lambda)$, let 
$${\bf{z}}_{E} \in H^{1}(\BZ[\frac{1}{p}], T\otimes_{\BZ_{p}} \Lambda) \otimes_{\BZ_{p}} \BQ_{p}$$ 
be the $\Lambda$-adic deformation of $z_{E}$ due to Kato.
%\footnote{In fact, Kato first constructs the $\Lambda$-adic zeta element ${\bf{z}}_{E}$ and so one may consider $z_{E}$.}. 
Let $X_{\st}(E_{/\BQ_{\infty}})$ be the attached strict Selmer group.
% associated to $E_{/\BQ}$.
Kato's main conjecture predicts: 
\begin{itemize}
\item[(i)] $X_{\st}(E_{/\BQ_{\infty}})$ is $\Lambda$-torsion and 
\item[(ii)] we have 
$$
\xi\big{(}H^{1}(\BZ[\frac{1}{p}], T \otimes_{\BZ_{p}} \Lambda) \otimes \BQ_{p}/ \Lambda \cdot{{\bf{z}}_{E}}\big{)}
=\xi(X_{\st}(E_{/\BQ_{\infty}})),
$$
an equality of ideals in $\Lambda \otimes_{\BZ_p} \BQ_{p}$ for $\xi(\cdot)$ the $\Lambda\otimes_{\BZ_p} \BQ_p$-characteristic ideal.
\end{itemize}
%We would like to emphasise that 
%Note that the conjecture applies to any prime $p$.
% in particular independent of the type of reduction of the elliptic curve at $p$. 
%As for Theorem \ref{ThmC}, a key:  
A key observation is that 
the rank zero $p$-converse 
%to a theorem of Coates--Wiles and Rubin follows from 
is a consequence of Kato's main conjecture.
% via Galois descent.

A principal result is a proof of the CM case of Kato's main conjecture (cf.~Theorem \ref{KMC}). 
%For CM curves, one may hope to study the arithmetic via that of the CM field.
% for the CM field $K$.
%We approach 
It is approached   
%Kato's main conjecture 
%in the CM case 
via an equivariant main conjecture for the CM field involving elliptic units. This relies on 
%an intrinsic relation among the zeta elements, namely 
a link between the Beilinson--Kato elements and elliptic units  
%The approach is due to 
due to Kato \cite[\S15]{K}. 
As for the equivariant main conjecture up to tensoring with $\BQ_p$, a result of Johnson-Leung and Kings \cite{JLK} is employed.
% which holds up to tensoring with $\BQ_{p}$ for any prime $p$. 
%An essential difference among our result and \cite[\S15]{K} towards Kato's IMC is due to the reliance on \cite{JLK} instead of \cite{Ru}.

Our approach to the $p$-converse differs from that of Rubin \cite{Ru}.
 %(cf.~Remark \ref{Rubcv}). 
 It is uniform for any prime $p$, whereas \cite[\S11]{Ru} distinguishes the ordinary and non-ordinary primes, the latter being more complex. 
The Beilinson--Kato elements do not appear in \cite{Ru}, 
while they are elemental to the present approach. The hypothesis \eqref{sp} seems essential for the Euler system argument in \cite{Ru}.
The proof of Kato's main conjecture, Theorem \ref{KMC}, has other consequences. 
It gives rise to a rank one $2$-converse theorem for CM curves, which is utilised in the proof of a recent result of Alpoge--Bhargava--Shnidman: a positive proportion of the integers are the sum of two rational cubes. 

%To treat all primes $p$,

% to treat any prime $p$. 

%Earlier results due to Tian (\cite{Ti}), Tian--Yuan--Zhang (\cite{TYZ}) and eventually Smith (\cite{Sm1}) towards the even parity case of the Goldfeld conjecture
%for the congruent number elliptic curve are based on a mod $2$ approach initiated in \cite{Ti}. 
%In \cite{TYZ}, a mod $2$ criterion was given for a square-free positive $n \equiv 1,2, 3 \mod{8}$ to have both mod $2$ Selmer 
%and analytic rank zero 
%in terms of the Legendre symbols $(\frac{q}{r})$ for $q,r$ the prime divisors of $n$. 
%The criterion is based on Waldspurger formula on a definite Shimura set, a mod $2$ analysis of the corresponding toric periods and an induction method for certain genus periods. 
%Via the Waldspurger formula, the toric periods encode the vanishing or non-vanishing of the underlying central L-values.
%In \cite{Sm1}, the criterion was shown to be closely related to an analysis appearing in \cite{HB2}
%and accordingly proven to hold for at least $41.9\%$ of the square-free positive integers $n \equiv 1,2,3 \mod{8}$. 
%The approach relies on the distribution of $4$-parts of ideal class groups of imaginary quadratic fields and in turn on the corresponding $4$-governing fields. 
 
%We instead introduce an Iwasawa-theoretic approach via Kato's IMC. 
%Recall that the IMC up to tensoring with $\BQ_{p}$ suffices for the $p$-converse theorem. 
After tensoring with 
$\BQ_p$, the main conjectures bypass 
%a number of intricacies 
intricacies which remain to be resolved,  
especially for the prime $p=2$ (cf. \cite{CKLZ}).
% for an integral version of Kato's IMC or the equivariant main conjecture for an imaginary quadratic field with $p$ non-split.
%The prime $p=2$ seems to posses deeper complexity for the integral version \cite{CKLZ}.
%As 
For the rational version,
 %of the equivariant main conjecture, 
 the strategy of 
 %\cite{HK} was employed in 
 \cite{JLK}
%  to treat any prime $p$. 
%  This was  
%which 
was inspired by Kato's pioneering work on the equivariant Tamagawa number conjecture (ETNC). 
We hope that Theorem \ref{ThmA} is just a glimpse of concrete applications of the general framework 
%afforded by 
of the ETNC. 
%the possible importance of general formalism afforded by the Tamagawa number conjecture. 

%The prime $p=2$ is fundamental to \cite{Sm2}, thereby being crucial in this Iwasawa-theoretic approach to (an example of) the Goldfeld conjecture.
%It is fundamental to \cite{Sm2}. Incidentally, the congruent number curves have additive reduction at $2$ 
%and Rubin's $p$-converse theorem excludes the case $p=2$.
It will be interesting to exhibit instances of the even parity Goldfeld conjecture for non-CM curves. 
Since the results of \cite{Sm2} apply, it is natural to seek the rank zero $2$-converse for non-CM curves. 
In the near future 
we plan to consider instances of Kato's main conjecture for $2$-ordinary non-CM curves.

\subsubsection{Organisation} Section \ref{s:KMC} presents the CM case of Kato's main conjecture.
We begin with a result towards the equivariant main conjecture for imaginary quadratic fields \S\ref{ss:EMC} and 
then prove Kato's main conjecture for CM newforms  
%of Kato's main conjecture 
\S\ref{ss:KMC}.
Section \ref{s:pcv} describes the proofs of main results. 
The $p$-converse theorem \S\ref{ss:mr} is followed by results towards the Goldfeld conjecture \S\ref{ss:cp}.
% we describe a result towards an analogue of the Goldfeld conjecture over imaginary quadratic fields. 

\subsubsection*{Acknowledgement}
 We thank John Coates, Matthias Flach, Chandrashekhar Khare, Shinichi Kobayashi, Karl Rubin, Alex Smith  
and Wei Zhang for helpful conversations. It is a pleasure to thank Chris Skinner and Shou-Wu Zhang for inspiring discussions and encouragement. We are grateful to the referee for valuable suggestions. 

The debt this article owes to the work of Kazuya Kato is self-evident, and sincerely acknowledged.

\section{Kato's main conjecture}\label{s:KMC}
%In this section, we consider Kato's main conjecture for a strict Iwasawa Selmer group
%associated to an elliptic newform in terms of
%the corresponding Beilinson--Kato elements in the CM case.
The principal result of this section is Theorem \ref{KMC}. 
\subsubsection{Notation} 
Let $c\in \Gal(\BC/\BR)$ be the complex conjugation.
 
Let $\ov{\BQ}$ be a fixed algebraic closure of $\BQ$. 
For a subfield $F \subset \ov{\BQ}$, let 
$G_{F}=\Gal(\ov{\BQ}/F)$. 
For a place $v$ of $F$, let $\ov{F}_{v}$ denote a fixed algebraic closure of $F_{v}$ and $G_{F_{v}}=\Gal(\ov{F}_{v}/F_{v})$. Choose an $F$-linear embedding $\ov{\BQ} \hookrightarrow \ov{F}_{v}$, which identifies $G_{F_{v}}$ as a subgroup of $G_{F}$. Let $p$ be a prime. Let $\mu_{p^{\infty}}$ be the set of $p$-power roots of unity in $\ov{\BQ}^\times$. Let $(\zeta_{p^{n}})_{n \geq 1}$ be a compatible system of primitive $p^{n}$-th roots of unity. 
Let $\epsilon:G_{\BQ} \ra \BZ_{p}^\times$ be the $p$-adic cyclotomic character. Fix embeddings $\iota_{\infty}: \ov{\BQ} \hookrightarrow \BC$ and $\iota_{p}: \ov{\BQ} \hookrightarrow \BC_{p}$.

For a ring $R$, a prime ideal $\fq \subset R$ and an $R$-module $M$, let $M_{\fq}$ denote the localisation. \subsection{Equivariant main conjecture}\label{ss:EMC}
We describe a result towards the equivariant main conjecture for imaginary quadratic fields. For details, one may refer to \cite{Ru}, \cite{JLK}, and especially \cite[\S15]{K}.

Let $K$ be an imaginary quadratic field. Fix an embedding $\iota: K \hookrightarrow \BC$. 
\subsubsection{Setup} 
Let $K^{\ab}$ be the maximal abelian extension of $K$ in $\BC$. For a non-zero ideal $\fa$ of $\cO_{K}$, 
let $K(\fa) \subset K^{\ab}$ be the ray class field of conductor $\fa$.

Fix a prime $p$ and a non-zero ideal $\ff$. Let 
$$
K(p^{\infty}\ff) = \bigcup_{n} K(p^{n}\ff), \text{  } G_{p^{\infty}\ff} = \Gal(K(p^{\infty}\ff)/K).
$$
Note that
$
G_{p^{\infty}\ff} \simeq \BZ_{p}^{2} \times \Delta_{\ff}
$
for $\Delta_{\ff}$ a finite group.

Let
$$
\fh^{1}=\varprojlim_{K'} (\cO_{K'}[1/p]^{\times} \otimes_{\BZ} \BZ_{p}) \text{, }
\fh^{2}=\varprojlim_{K'} \Cl_{K'}(p), 
$$
where $K'$ varies over finite extensions of $K$ contained in $K(p^{\infty}\ff)$, $\Cl_{K'}(p)$ denotes the $p$-primary part of the ideal class group $\Cl_{K'}$ and the inverse limit is with respect to the norm maps.
Recall that $\fh^{1}$ and $\fh^{2}$ are finitely generated modules over the three-dimensional semi-local ring 
$\BZ_{p}[\![G_{p^{\infty}\ff}]\!]$. Moreover, $\fh^{1}$ is a torsion-free $\BZ_{p}[\![G_{p^{\infty}\ff}]\!]$-module such that
$\fh^{1}_{\fq}$ is of dimension one for any height zero prime ideal $\fq$ of $\BZ_{p}[\![G_{p^{\infty}\ff}]\!]$,  
%\footnote{i.e. $\BZ_{p}[[G_{p^{\infty}\ff}]]_{\fq}$ being a field}. 
and $\fh^{2}$ is a torsion $\BZ_{p}[\![G_{p^{\infty}\ff}]\!]$-module.
Let
$$
\fz \subset \fh^{1}
$$
be the submodule of elliptic units as in \cite[\S15.5]{K}. Then $\fh^{1}/\fz$ is a torsion 
$\BZ_{p}[\![G_{p^{\infty}\ff}]\!]$-module.

For a $\BZ_{p}[\![G_{p^{\infty}\ff}]\!]$-module $M$, put $M_{\BQ}=M \otimes_{\BZ} \BQ$.
% be a 
%$(\BZ_{p}[\![G_{p^{\infty}\ff}]\!] \otimes_{\BZ} \BQ)$-module. 
For a $(\BZ_{p}[\![G_{p^{\infty}\ff}]\!] \otimes_{\BZ}\BQ)$-torsion module $N$, let $\xi(N)$ be the characteristic ideal.

\subsubsection{Equivariant main conjecture} The following result towards the equivariant main conjecture for imaginary quadratic fields is due to Johnson-Leung and Kings \cite{JLK}. It builds on \cite{Ru} and \cite{HK}. 
\begin{thm}\label{EMC}
Let the notation be as above. 
Then we have
$$
\xi (\fh^{2}_{\BQ}) = 
\xi ((\fh^{1}/\fz)_{\BQ}),
$$
an equality of ideals in $\BZ_{p}[\![G_{p^{\infty}\ff}]\!] \otimes \BQ$.
\end{thm}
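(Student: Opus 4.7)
The plan is to establish the stated equality of characteristic ideals via two separate divisibilities, in the spirit of Rubin's original approach \cite{Ru} to the main conjecture for imaginary quadratic fields, but now over the equivariant Iwasawa algebra $\BZ_{p}[\![G_{p^{\infty}\ff}]\!] \otimes \BQ$. Since we are working after tensoring with $\BQ$, the finite part $\Delta_{\ff}$ decomposes: one may localize at each minimal prime and reduce (character-by-character of $\Delta_{\ff}$) to a statement over the three-dimensional Iwasawa algebras $\BZ_{p}[\chi][\![\Gal(K(p^{\infty}\ff)/K(\ff))]\!] \otimes \BQ$. This rationalization is what permits one to drop the hypothesis $p \nmid \# \cO_{K}^{\times}$ present in \cite{Ru}, since the problematic characters concern $p$-power torsion in $\Delta_{\ff}$ and are killed by clearing denominators.

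First I would prove the divisibility $\xi(\fh^{2}_{\BQ}) \supseteq \xi((\fh^{1}/\fz)_{\BQ})$ via the Euler system of elliptic units. Following Rubin's Kolyvagin-style machinery, one constructs from elliptic units at auxiliary conductors, together with their norm-compatibility relations, cohomology classes that bound $\fh^{2}_{\BQ}$ in terms of $(\fh^{1}/\fz)_{\BQ}$ on each $\chi$-isotypic component. In the equivariant setting, an alternative intrinsic route is to use Huber--Kings' motivic polylogarithm, which packages the Euler system without character-by-character unpacking and supplies the bound directly over $\BZ_{p}[\![G_{p^{\infty}\ff}]\!] \otimes \BQ$.

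For the reverse divisibility $\xi(\fh^{2}_{\BQ}) \subseteq \xi((\fh^{1}/\fz)_{\BQ})$, the natural route is through Katz's two-variable $p$-adic $L$-function. By Kato's explicit reciprocity in the CM setting as recalled in \cite[\S15]{K}, the image of $\fz$ under an appropriate Coleman/big-logarithm map recovers the Katz $p$-adic $L$-function, so $\xi((\fh^{1}/\fz)_{\BQ})$ is essentially generated by this $L$-function. On the other hand, an equivariant form of the analytic class number formula---delivered by Huber--Kings through the syntomic realization of the polylogarithm---identifies $\xi(\fh^{2}_{\BQ})$ with the same $L$-function, up to precisely the ambiguity tolerated after tensoring with $\BQ$.

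The main obstacle I expect is absorbing the characters of $\Delta_{\ff}$ at which $p$ behaves badly in the intermediate field $K(\ff)$---particularly the inert or ramified cases, which lie outside the scope of \cite{Ru}---as well as coordinating the local-global compatibilities for $p = 2$. These are precisely the pressure points where Rubin's integral argument breaks; the Johnson-Leung--Kings strategy handles them by reformulating both sides within the determinant-functor formalism of Kato's ETNC, so that the $p$-adic $L$-function and the zeta-element sides are matched uniformly at all characters. Working modulo $\BZ_{p}$-torsion is essential: the integral statement would require $\mu$-invariant control and precise local Tate duality computations that remain open in this generality.
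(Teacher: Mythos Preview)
The paper's own proof is not a self-contained argument: it simply cites Johnson-Leung--Kings \cite[\S7.2]{JLK}, noting that their result gives the equality of localized lengths at every height-one prime $\fp$ of $\BZ_p[\![G_{p^\infty\ff}]\!]$ with $p\notin\fp$, which is exactly what the statement asserts after tensoring with $\BQ$. So you are not proposing an alternative to the paper's proof; you are attempting to sketch what \cite{JLK} itself does.

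Your sketch conflates two different strategies, and the middle one does not work. The Euler-system divisibility in your first step is fine and is indeed an ingredient. But your proposed reverse divisibility via Katz's two-variable $p$-adic $L$-function and a Coleman/big-logarithm map is not the route of \cite{JLK} and cannot succeed as stated: the Katz $p$-adic $L$-function only exists when $p$ splits in $K$, whereas Theorem~\ref{EMC} is asserted for \emph{all} primes $p$; moreover the Coleman map packages local information at the primes above $p$, which is precisely what is discarded upon tensoring with $\BQ$ (i.e.\ restricting to height-one primes with $p\notin\fp$). You half-acknowledge this in your last paragraph, where you correctly point to the determinant/ETNC formalism --- that is in fact the actual method. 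Johnson-Leung and Kings recast the equivariant main conjecture away from $p$ as an instance of the ETNC for the motives attached to abelian extensions of $K$, and establish it by a limit argument together with the analytic class number formula, with \cite{HK} and \cite{Ru} as inputs; no $p$-adic $L$-function enters. Your proposal would be on firmer ground if you dropped the Katz/Coleman route entirely and developed the ETNC reduction you allude to at the end.
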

\begin{proof}
Let $\fp$ be a height one prime ideal of $\BZ_{p}[\![G_{p^{\infty}\ff}]\!]$ such that $p \notin \fp$.
%\footnote{Such a $\fp$ is referred as being regular in \cite[\S7.2]{JLK}.}. 
Then we have
$$
\length_{\BZ_{p}[\![G_{p^{\infty}\ff}]\!]_{\fp}} \fh^{2}_{\fp} = 
\length_{\BZ_{p}[\![G_{p^{\infty}\ff}]\!]_{\fp}} ((\fh^{1}/\fz)_{\fp}), 
$$
after\footnote{One may to \cite[\S15.6]{K} for a description of the underlying Iwasawa modules in terms of Iwasawa cohomology. The formulation in Theorem \ref{EMC} is then identical with the one in \cite[\S7.2]{JLK}.} Johnson-Leung and Kings
%Indeed, this is nothing but the content of 
\cite[\S7.2]{JLK}. 
\end{proof}
\begin{remark}
For height one primes $\fp$ of $\BZ_{p}[\![G_{p^{\infty}\ff}]\!]$ 
with $p \in \fp$, a result towards the equivariant main conjecture is proved in \cite[Thm. 5.7 \& \S5.5]{JLK}. 
The conjecture is still open for $p$ non-split in $K$. 
%the conjecture is still open at such primes. 
\end{remark}

\subsection{Kato's main conjecture}\label{ss:KMC}
%In this subsection, we describe a result towards Kato's main conjecture in the CM case. 
For background, one may refer to \cite[\S12 \& \S15]{K}.

\subsubsection{Elliptic newforms} 
Let $f \in S_{k}(\Gamma_{1}(N))$ be an elliptic newform with $q$-expansion coefficents $\{a_{n}(f)\}_{n\in \BN}$ and 
$F=\BQ(\{a_{n}(f)\}_{n})$ the Hecke field.  
%of weight $k \geq 2$ and level $\Gamma_{1}(N)$.
%Let $F$ be the Hecke field of $f$. 

For a place $\lambda$ of $F$ above $p$, let $F_{\lambda}$ be the completion and $O_{\lambda}$ its integer ring. Let $V_{F_{\lambda}}(f)$ be the two-dimensional $G_{\BQ}$-representation 
%of $G_{\BQ}$ over $F_{\lambda}$ 
associated to $f$ as in \cite[(8.3)]{K}.

\subsubsection{Cyclotomic tower}\label{ss:cyc}
For $n \in \BZ_{\geq 0}$, let
$$
G_{n}=\Gal(\BQ(\zeta_{p^{n}})/\BQ)\text{, } 
G_{\infty}=\varprojlim_{n} G_{n}=\Gal(\BQ(\zeta_{p^{\infty}})/\BQ).
$$
The cyclotomic character induces an isomorphism $\kappa: G_{\infty} \simeq \BZ_{p}^\times$, and let $\sigma_c$ be the inverse image of $c\in\BZ_p^\times$. 
Recall that $\BZ_{p}^{\times}\simeq\BZ_{p} \times \Delta$ for
$$
\Delta \simeq
\begin{cases}
\BZ/(p-1) & \text{if $p$ odd} \\
\BZ/2 & \text{else}.
\end{cases}
$$

Let
$$
\Lambda=O_{\lambda}[\![G_{\infty}]\!]
$$
be a two dimensional complete semi-local ring.

\subsubsection{Iwasawa cohomology} Let $T \subset V_{F_{\lambda}}(f)$ be a $G_{\BQ}$-stable $O_{\lambda}$-lattice. For $q \in \BZ$, let
$$
\BH^{q}(T) = \varprojlim_{n} H^{q}(\BZ[\zeta_{p^{n}},1/p],T)
$$
be a $\Lambda$-module, where $H^{q}$ denotes the \'etale cohomology as in \cite[8.2]{K}. 
Let $$\BH^{q}(V_{F_{\lambda}}(f))=\BH^{q}(T)\otimes \BQ$$ be a $(\Lambda \otimes \BQ)$-module. 

The following basic result is due to Kato \cite[Thm 12.4]{K}. 
\begin{thm}\label{Ka1}
Let $f \in S_{k}(\Gamma_{1}(N))$ be an elliptic newform and $p$ a prime. 
Then: 
\begin{itemize}
\item[(1)] $\BH^{2}(V_{F_{\lambda}}(f))$ is a torsion $(\Lambda \otimes \BQ)$-module and
\item[(2)] $\BH^{1}(V_{F_{\lambda}}(f))$ is a free $(\Lambda \otimes \BQ)$-module of rank one.
\end{itemize}
\end{thm}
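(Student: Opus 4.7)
The plan is to deduce part (1) from Kato's Euler system of Beilinson--Kato elements, and then to obtain part (2) by combining (1) with the vanishing of $\BH^0$ and the Iwasawa-theoretic global Euler characteristic formula.

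For part (1), the starting point is the existence of the $\Lambda$-adic Beilinson--Kato class ${\bf z}_f \in \BH^1(V_{F_\lambda}(f))$, whose specialization at each critical Dirichlet character $\chi$ of $p$-power conductor recovers, via Kato's explicit reciprocity law, the $L$-value $L(f,\chi,1)$ up to explicit transcendental and $p$-adic periods. Because these $L$-values are not identically zero as $\chi$ varies, ${\bf z}_f$ is not $\Lambda$-torsion. The Euler system machine of \cite[Ch.~13]{K}, applied to the norm-compatible system arising from Kato's construction, then bounds the associated dual Selmer module; by Poitou--Tate global duality over the cyclotomic tower, this bound translates into the statement that $\BH^2(V_{F_\lambda}(f))$ is $(\Lambda\otimes\BQ)$-torsion.

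For part (2), the first step is to verify $\BH^0(V_{F_\lambda}(f))=0$. Since $f$ is a newform, $V_{F_\lambda}(f)$ is absolutely irreducible as a $G_\BQ$-representation, and the restriction of its image to $G_{\BQ(\zeta_{p^\infty})}$ remains sufficiently large (Ribet-type openness) to have no non-zero fixed vectors. Combined with Tate's global Euler--Poincar\'e characteristic formula at each finite level $\BZ[\zeta_{p^n},1/p]$, passed to the inverse limit and to $(\Lambda\otimes\BQ)$-ranks, one obtains
$$
-\rank_{\Lambda\otimes\BQ}\BH^1(V_{F_\lambda}(f)) + \rank_{\Lambda\otimes\BQ}\BH^2(V_{F_\lambda}(f)) = -d^-(V_{F_\lambda}(f)) = -1,
$$
where $d^-$ denotes the dimension of the $(-1)$-eigenspace of complex conjugation (equal to $1$ for a modular newform) and $\BH^{\geq 3}$ vanishes by cohomological dimension. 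With (1) in hand, this forces $\rank_{\Lambda\otimes\BQ}\BH^1 = 1$. To upgrade rank one to freeness, observe that the $\Lambda$-torsion submodule of $\BH^1(T)$ injects into the inverse limit of $H^0$'s of appropriate finite-level quotients and hence vanishes; since $\Lambda\otimes\BQ$ is a finite product of discrete valuation rings (indexed by the characters of $\Delta$), a finitely generated torsion-free module of generic rank one is free of rank one.

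The main obstacle is part (1): it is precisely the weak Leopoldt conjecture for $V_{F_\lambda}(f)$, and the only known route passes through the deep construction of Kato's zeta element together with the explicit reciprocity law relating it to $p$-adic $L$-values of $f$. Once (1) is granted, part (2) is essentially a formal consequence of the Euler characteristic computation and the vanishing of $\BH^0$.
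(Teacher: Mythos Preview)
The paper does not prove this theorem; it is quoted directly as \cite[Thm.~12.4]{K} and attributed to Kato. Your sketch is a faithful outline of Kato's own argument there: part~(1) is indeed the weak Leopoldt statement, established via the non-torsion Beilinson--Kato class and the Euler system machinery together with Poitou--Tate duality, and part~(2) then follows by combining the vanishing of $\BH^{0}$ with the Iwasawa-theoretic Euler--Poincar\'e formula.

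One small correction: $\Lambda\otimes\BQ$ is not a finite product of discrete valuation rings. Decomposing by characters of $\Delta$ yields components of the form $O_{\lambda'}[\![T]\!][1/p]$, each of which is a one-dimensional regular noetherian domain (hence a PID) with infinitely many maximal ideals, not a DVR. Your freeness conclusion---torsion-free of rank one over a PID implies free of rank one---still holds, so the argument is unaffected; just adjust the terminology.
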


\subsubsection{Beilinson--Kato elements}
The following pivotal existence of zeta elements for the $p$-adic Galois representation associated to an elliptic newform is due to Kato \cite[Thm. 12.5]{K}. 

\begin{thm}\label{BK}
Let $f \in S_{k}(\Gamma_{1}(N))$ be an elliptic newform and $p$ a prime. 
%Then the following holds. 
\begin{itemize}
\item[(1)] There exists a non-zero $F_{\lambda}$-linear map
$$
V_{F_{\lambda}}(f) \ra \BH^{1}(V_{F_{\lambda}}(f)); 
\text{  $\gamma \mapsto {\bf{z}}_{\gamma}(f)$}.
$$
\item[(2)] Let $Z(f)$ be the $(\Lambda \otimes \BQ)$-submodule of $\BH^{1}(V_{F_{\lambda}}(f))$ 
generated by ${\bf{z}}_{\gamma}(f)$ for all $\gamma \in V_{F_{\lambda}}(f)$. Then 
$$
\BH^{1}(V_{F_{\lambda}}(f))/Z(f)
$$
is a torsion $(\Lambda \otimes \BQ)$-module.
\end{itemize}
\end{thm}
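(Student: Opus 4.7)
The overall plan is to construct the zeta elements $\mathbf{z}_\gamma(f)$ by transporting Beilinson's modular units in the $K$-theory of modular curves (and, for weight $k>2$, of Kuga--Sato varieties) to \'etale cohomology via the $p$-adic regulator, and then to assemble the resulting classes into a norm-compatible tower over the cyclotomic $\BZ_p$-extension. This is the strategy Kato develops in detail in \cite{K}. The theorem splits cleanly into (1), which requires both the construction and its non-triviality, and (2), which will follow formally from Theorem \ref{Ka1}(2).

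For the construction, I would start with the Siegel units $g_\alpha \in \cO(Y(N))^\times$ indexed by nonzero primitive $\alpha \in (\BZ/N)^2$, arising from weight-one Eisenstein series (equivalently, Kato's $_c\theta$ functions). The Steinberg symbols $\{g_\alpha, g_\beta\}$ give classes in $K_2(Y(N))$; for weight $k > 2$ one pairs with the coefficient sheaf $\Sym^{k-2}$ by moving to the appropriate Kuga--Sato variety, or, equivalently, uses Beilinson's Eisenstein symbol in motivic cohomology. Applying the \'etale Chern class followed by the Hochschild--Serre edge map, and projecting to the $f$-isotypic component via the Hecke algebra, yields classes in $H^1(\BZ[\zeta_{p^n},1/p], V_{F_\lambda}(f))$ for each $n$. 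The construction is bilinear in the pair of Siegel units, and after contracting with $\gamma \in V_{F_\lambda}(f)$ via Poincar\'e duality on the motivic fibre, it becomes $F_\lambda$-linear in $\gamma$.

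To produce classes in the inverse limit $\BH^1(V_{F_\lambda}(f))$, I would exploit the distribution relations of Siegel units: the norm of $g_{\alpha'}$ under the covering $Y(p^{n+1}N) \to Y(p^n N)$ recovers $g_\alpha$. Under the regulator these translate into the corestriction compatibility $\Cor_{\BQ(\zeta_{p^{n+1}})/\BQ(\zeta_{p^n})}$ required to assemble a class in $\BH^1(T) = \varprojlim_n H^1(\BZ[\zeta_{p^n},1/p], T)$; tensoring with $\BQ$ gives the map in (1).

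The main obstacle is showing that this map is non-zero, as demanded already by (1). I would appeal to Kato's explicit reciprocity law, which identifies the image of $\mathbf{z}_\gamma(f)$ under the dual exponential map, specialised at a finite-order character $\chi$ of $G_\infty$ and a critical integer $j$, with the $L$-value $L(f \otimes \chi^{-1}, j)$ up to a non-zero period and Euler factor. By Rohrlich's non-vanishing theorem, $L(f \otimes \chi, j) \neq 0$ for all but finitely many Dirichlet characters $\chi$ of $p$-power conductor at a fixed critical $j$, so some specialisation is non-zero, which forces $\mathbf{z}_\gamma(f) \neq 0$ for a suitable $\gamma$. For part (2), Theorem \ref{Ka1}(2) asserts that $\BH^1(V_{F_\lambda}(f))$ is free of rank one over $\Lambda \otimes \BQ$; decomposing by characters of $\Delta$, each component is a free rank-one module over a Noetherian integral domain, so the non-zero submodule $Z(f)$ has torsion quotient, completing (2).
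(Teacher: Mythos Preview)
The paper does not prove this theorem at all: it is stated with attribution to Kato \cite[Thm.~12.5]{K} and used as a black box. Your sketch is a faithful outline of Kato's actual argument in \cite{K}---Siegel units, Steinberg symbols (or the Eisenstein symbol for $k>2$), the $p$-adic regulator, norm relations from the distribution law, non-triviality via the explicit reciprocity law together with Rohrlich's non-vanishing theorem---so there is nothing to compare against in this paper.

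Two minor cautions on the internal logic. First, in \cite{K} the proof of Theorem~\ref{Ka1} (that $\BH^2$ is torsion and $\BH^1$ is free of rank one) itself relies on the Euler system furnished by the construction in Theorem~\ref{BK}(1); thus the correct order is: construct $\mathbf{z}_\gamma(f)$, establish its non-triviality via the reciprocity law and Rohrlich, deduce Theorem~\ref{Ka1} by the Euler-system machinery, and only then read off (2). Your use of Theorem~\ref{Ka1}(2) for part (2) is therefore legitimate, but not logically prior. Second, in the last step you say ``the non-zero submodule $Z(f)$ has torsion quotient''; since $\Lambda\otimes\BQ$ decomposes as a product over characters of $\Delta$, you need $Z(f)$ to be non-zero in \emph{each} component, which again comes from Rohrlich applied to characters in each residue class---worth making explicit.
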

\begin{remark} For a characterising property of the map $\gamma \mapsto {\bf{z}}_{\gamma}(f)$ in terms of the underlying critical $L$-values,  
one may refer to \cite[Thm. 12.5 (1)]{K}.
\end{remark}

%For a torsion $(\Lambda \otimes \BQ)$-module $N$, let $\xi(N)$ denote the characteristic ideal.

\subsubsection{CM modular forms} 
A Hecke character $\psi$ over $K$ has infinity type $(m,n)$ if the restriction of $\psi$ to 
the archimedean part $\BC^{\times}$ of the idele class group over $K$ is of the form
$$
z \mapsto z^{m} \cdot \bar{z}^{n}, 
$$
where $\bar{\cdot} \in \Gal(K/\BQ)$ is induced by the embedding $\iota$ and the complex conjugation $c$.

Let $f \in S_{k}(\Gamma_{1}(N))$ be a CM newform.
% with weight $k \geq 2$ and level 
%$\Gamma_{1}(N)$. 
Then, we have 
$$
L(s,f)=L(s,\psi)
$$
for a Hecke character $\psi$ over the underlying CM field $K$ with infinity type $(1-k,0)$, 
 %whose conductor divides $N$. 
 and $L(s,\cdot)$ the complex $L$-function.
 We follow arithmetic normalisation for the complex $L$-functions. 
% \footnote{We follow arithmetic normalisations for complex $L$-functions throughout.}.

\subsubsection{Kato's main conjecture} 
%We have the following towards Kato's main conjecture in the CM case.
\begin{thm}\label{KMC}
Let $f \in S_{k}(\Gamma_{1}(N))$ be a CM newform and $p$ a prime. 
Let $F$ be the Hecke field and $\lambda$ a prime above $p$. 
Let $V_{F_{\lambda}}(f)$ be the associated Galois representation and 
$\BH^{q}(V_{F_{\lambda}}(f))$ the Iwasawa cohomology.
% (\S2.2.3). 
Let $Z(f) \subset \BH^{1}(V_{F_{\lambda}}(f))$ be the $(\Lambda \otimes \BQ)$-submodule 
%generated by the 
of Beilinson--Kato elements.
Then 
%we have 
$$
\xi(\BH^{2}(V_{F_{\lambda}}(f)))=\xi(\BH^{1}(V_{F_{\lambda}}(f))/Z(f)), 
$$
an equality of ideals in $\Lambda \otimes \BQ$.
\end{thm}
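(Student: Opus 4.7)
The plan is to prove Theorem \ref{KMC} by descending the equivariant main conjecture over the imaginary quadratic field $K$ (Theorem \ref{EMC}) to Kato's formulation over $\BQ$, using the fact that a CM newform's Galois representation is an induced representation.

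First, since $f$ is a CM newform with associated Hecke character $\psi$ of the CM field $K$ of infinity type $(1-k,0)$, the $p$-adic Galois representation decomposes as
$$
V_{F_{\lambda}}(f) \otimes_{F_\lambda} F_{\lambda}' \;\cong\; \Ind_{G_{K}}^{G_{\BQ}} \psi_{\lambda},
$$
after possibly enlarging the coefficient field, where $\psi_\lambda$ is the $\lambda$-adic avatar of $\psi$. By Shapiro's lemma applied to the cyclotomic tower, the $\Lambda$-modules $\BH^{q}(V_{F_{\lambda}}(f))$ are identified with the Iwasawa cohomology of $\psi_\lambda$ over the composite $K(\zeta_{p^\infty})/K$. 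The latter in turn fits into the equivariant framework of \S\ref{ss:EMC}: choosing $\ff$ divisible by the prime-to-$p$ conductor of $\psi$, we obtain $\BZ_p[\![G_{p^\infty \ff}]\!]$-modules $\fh^1, \fh^2$ whose specialization (via the natural surjection $\BZ_p[\![G_{p^\infty \ff}]\!] \twoheadrightarrow \Lambda$ cut out by $\psi$) recovers $\BH^1(V_{F_\lambda}(f))$ and $\BH^2(V_{F_\lambda}(f))$ up to the necessary coefficient extension, via the standard Poitou--Tate / Iwasawa cohomology description in \cite[\S15.6]{K}.

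Next I would invoke Kato's explicit reciprocity-type computation in \cite[\S15]{K} that links the Beilinson--Kato zeta elements for the CM newform $f$ with elliptic units for the associated Hecke characters $\psi$ and $\psi^{c}$. Concretely, under the induction identification above, the submodule $Z(f) \subset \BH^1(V_{F_\lambda}(f))$ becomes the image, under the specialization map, of the elliptic unit submodule $\fz \subset \fh^1$ (again after extending coefficients). Thus $Z(f)$ corresponds to $\fz$ and $\BH^2(V_{F_\lambda}(f))$ corresponds to $\fh^2$, and both correspondences commute with the specialization of characteristic ideals at the height-one primes that survive after $\otimes \BQ$.

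Finally, applying Theorem \ref{EMC} gives $\xi(\fh^2_{\BQ}) = \xi((\fh^1/\fz)_{\BQ})$ in $\BZ_p[\![G_{p^\infty \ff}]\!] \otimes \BQ$, and pushing forward along the specialization map yields
$$
\xi(\BH^{2}(V_{F_{\lambda}}(f))) \;=\; \xi(\BH^{1}(V_{F_{\lambda}}(f))/Z(f))
$$
in $\Lambda \otimes \BQ$, which is the desired equality; the free rank one conclusion of Theorem \ref{Ka1}(2) and the torsion assertion of Theorem \ref{BK}(2) guarantee both characteristic ideals are defined. The hard part, I expect, is Step 2: the careful translation of the equivariant Iwasawa modules $\fh^1, \fh^2$ and the elliptic unit module $\fz$ over $K(p^\infty \ff)/K$ into the cyclotomic Iwasawa modules over $\BQ$, tracking both the twist by $\psi$ and the coefficient field extension, while ensuring that characteristic ideals specialize correctly (this is where tensoring with $\BQ$ is essential, since it avoids the pseudo-null ambiguities which would otherwise obstruct the descent, particularly in the non-ordinary and $p=2$ cases).
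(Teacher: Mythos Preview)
Your proposal is correct and follows essentially the same route as the paper: both reduce the CM case of Kato's main conjecture to the equivariant main conjecture for $K$ (Theorem \ref{EMC}) via Kato's \S15 machinery, in particular the identification of Beilinson--Kato elements with elliptic units \cite[(15.16.1)]{K} and the descent of the Iwasawa modules $\fh^{1},\fh^{2},\fz$ to $\BH^{1},\BH^{2},Z(f)$ through the character $\psi$. The paper's proof is terser---it simply cites \cite[\S15]{K} for the reduction and substitutes Theorem \ref{EMC} for Rubin's input \cite[Thm.~15.2]{K}---whereas you have unpacked the Shapiro/induction and specialisation steps explicitly, but the content is the same.
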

\begin{proof}
%Let $\psi$ be the underlying Hecke character (\S2.2.5) and $\ff$ the conductor of $\psi$.
In \cite[\S15]{K} it is shown that the CM case of Kato's main conjecture \cite[Conj. 12.10]{K} in $\Lambda \otimes \BQ$ is a consequence\footnote{Being in a setup of $(\Lambda \otimes \BQ)$-modules, the hypotheses of \cite[Lem. 15.13]{K} hold.} of the equivariant main conjecture for the underlying imaginary quadratic field up to tensoring with $\BQ$. 
(Especially see \cite[15.6]{K}.) 
This is based on 
%an intrinsic 
a link 
%relation among the zeta elements, namely 
between the Beilinson--Kato elements and elliptic units \cite[(15.16.1)]{K}.
The proven case  \cite[Thm. 15.2]{K} of the equivariant main conjecture due to Rubin \cite{Ru} is employed in \cite[Prop. 15.17]{K}. 
In the present setting we instead utilise Theorem \ref{EMC}.
\end{proof}
\begin{remark}
A finer analysis of \cite[\S15]{K} is essential to relate the integral versions of the main conjectures in the above proof, 
which we hope to study in the near future.  
%Kato's main conjecture and the equivariant main conjecture. 
%possibly deduce an integral version of Kato's IMC in the CM case from an integral version of an equivariant main conjecture for the underlying CM field. 
%We hope to pursue the study of the integral version in a near future.
\end{remark}
\section{Proofs of main results}\label{s:pcv}
The central result is Theorem \ref{pcv}, which leads to Theorem \ref{ThmC}, Theorem \ref{ThmA} and Proposition \ref{Gdiq-I}.
%In this section, we prove a $p$-converse to a theorem of Gross--Zagier, Kolyvagin and Rubin for all primes $p$ and describe an application towards an analogue of the Goldfeld conjecture over imaginary quadratic fields.

\subsection{$p$-converse}\label{ss:mr} 
%In this subsection, we prove a $p$-converse to a theorem of Gross--Zagier, Kolyvagin and Rubin.
%Our main towards the $p$-converse is the following. 
\begin{thm}\label{pcv}
Let $f \in S_{k}(\Gamma_{1}(N))$ be a CM newform of even weight and $p$ a prime. 
Let $F$ be the Hecke field and $\lambda$ a prime above $p$. 
Let $V_{F_{\lambda}}(f)$ be the associated 
%$\lambda$-adic 
Galois representation
%$V_{F_{\lambda}}(f)(k/2)$ the Tate twist
and 
$H^{1}_{\rm{f}}(\BQ,V_{F_{\lambda}}(f)(k/2))$ the Bloch--Kato Selmer group.
Then 
$$
H^{1}_{\rm{f}}(\BQ,V_{F_{\lambda}}(f)(k/2))=0 \implies \ord_{s=k/2}L(s,f)= 0.
$$
\end{thm}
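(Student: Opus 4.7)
The $p$-converse will follow from the CM case of Kato's main conjecture (Theorem \ref{KMC}) combined with Kato's explicit reciprocity law at the central critical point, via cyclotomic descent. The plan is to specialise the $(\Lambda \otimes \BQ)$-identity
$$
\xi\bigl(\BH^{2}(V_{F_\lambda}(f))\bigr) \;=\; \xi\bigl(\BH^{1}(V_{F_\lambda}(f))/Z(f)\bigr)
$$
at the height one prime $\fp_{0} \subset \Lambda \otimes \BQ$ associated with the central critical twist $V_{F_\lambda}(f) \rightsquigarrow V_{F_\lambda}(f)(k/2)$, which is a meaningful integer twist precisely because $k$ is even.

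\textbf{Selmer-side descent.} Cyclotomic descent for the Iwasawa cohomology $\BH^{2}$ (after \cite[\S14, \S17]{K}) together with the Poitou--Tate exact sequence expresses the length of $(\BH^{2}(V_{F_\lambda}(f)))_{\fp_{0}}$ in terms of the Bloch--Kato Selmer group of $V_{F_\lambda}(f)(k/2)$ and its dual. The functional equation of $f$ yields an essential self-duality $V_{F_\lambda}(f)(k/2) \simeq V_{F_\lambda}(f)(k/2)^{*}(1)$, so that the two Selmer groups coincide; the hypothesis $H^{1}_{\rm{f}}(\BQ, V_{F_\lambda}(f)(k/2)) = 0$ therefore forces $(\BH^{2}(V_{F_\lambda}(f)))_{\fp_{0}} = 0$.

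\textbf{Zeta-side conclusion.} By the above and Theorem \ref{KMC}, $(\BH^{1}(V_{F_\lambda}(f))/Z(f))_{\fp_{0}} = 0$. Since $\BH^{1}(V_{F_\lambda}(f))$ is free of rank one over $\Lambda \otimes \BQ$ by Theorem \ref{Ka1}(2), this means that $Z(f)$ generates $\BH^{1}(V_{F_\lambda}(f))$ after localisation at $\fp_{0}$; in particular, some Beilinson--Kato class $\mathbf{z}_{\gamma}(f)$ specialises at $\fp_{0}$ to a non-zero element $z \in H^{1}(\BZ[1/p], V_{F_\lambda}(f)(k/2))$. Kato's explicit reciprocity law at the central critical point \cite[Thm. 12.5(1), \S16]{K} states that the image of $z$ under localisation at $p$ followed by the Bloch--Kato dual exponential equals $L(k/2, f)$ times a non-zero explicit factor. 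If $L(k/2, f) = 0$, then $z$ would be crystalline at $p$; since by construction it also satisfies the Bloch--Kato local conditions at all primes $\ell \neq p$, it would lie in $H^{1}_{\rm{f}}(\BQ, V_{F_\lambda}(f)(k/2)) = 0$, contradicting $z \neq 0$. Therefore $\ord_{s=k/2} L(s,f) = 0$.

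\textbf{Main obstacle.} The technical crux is the Selmer-side descent: verifying cleanly that $(\BH^{2})_{\fp_{0}}$ captures the Bloch--Kato data without spurious error terms. The ``after $\otimes \BQ$'' nature of the main conjecture is essential here, since it bypasses the integral subtleties at $p=2$ and at the trivial character component of $\Delta$ that would arise in a finer integral formulation. Kato's framework in \cite[\S14--\S17]{K} handles this uniformly across all primes, which is why the argument works for $p=2$, in contrast to Rubin's Euler system approach that required $p \nmid \# \cO_K^{\times}$.
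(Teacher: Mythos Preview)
Your proposal is correct and follows essentially the same route as the paper: vanishing of $H^{1}_{\rm f}$ forces the localisation of $\BH^{2}$ at the central prime to vanish (the paper cites Kato's exact sequence (14.9.3) rather than invoking self-duality, but these amount to the same Poitou--Tate input), Theorem~\ref{KMC} then makes the specialised Beilinson--Kato class $z$ nonzero, and Kato's explicit reciprocity law converts $\loc_{p}(z)\notin H^{1}_{\rm f}(\BQ_{p},\cdot)$ into $L(k/2,f)\neq 0$. The one point the paper handles more carefully is the choice of $\gamma$: it fixes $\gamma$ with $\gamma^{\pm}\neq 0$ for the sign $\pm=(-1)^{k/2-1}$, which is what guarantees that the explicit reciprocity law \cite[Thm.~12.5(1)]{K} actually relates the dual exponential of $z$ to a \emph{nonzero} multiple of $L(k/2,f)$ --- your phrase ``some $\mathbf{z}_{\gamma}(f)$ specialises to a nonzero $z$'' is true but leaves this compatibility implicit.
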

\begin{proof}
The following is based on Theorem \ref{KMC}.

Let $T \subset V_{F_{\lambda}}(f)$ be a $G_{\BQ}$-stable $O_{\lambda}$-lattice. 
Since $H^{1}_{\rm{f}}(\BQ,V_{F_{\lambda}}(f)(k/2))=0$, by the exact sequence \cite[(14.9.3)]{K}, 
note that 
%by Tate's Euler--Poincar\'e{e} formula, 
\begin{equation}\label{van}
H^{2}(\BZ[1/p], T(k/2)) \otimes \BQ=0.
\end{equation}
%(cf. \cite[(14.9.3)]{K}). 
%Here $H^{q}$ denotes the \'etale cohomology as in \cite[8.2]{K}.
Let $\pm = (-1)^{k/2 - 1}$ and pick $\gamma \in V_{F_{\lambda}}(f)$ such that $\gamma^{\pm} \neq 0$. Let $z$ be the image of 
the Beilinson--Kato element ${\bf{z}}_{\gamma}(f)$ (cf. Theorem \ref{BK}) under the map  
\begin{equation}\label{z-sp}
\BH^{1}(V_{F_{\lambda}}(f)) \simeq \BH^{1}(V_{F_{\lambda}}(f)(k/2)) \ra H^{1}(\BZ[1/p], T(k/2)) \otimes \BQ.
\end{equation}

By \eqref{van} and the Euler--Poincar\'e formula of Tate \cite[(14.9.5)]{K}, 
%it follows that 
$$
\dim_{F_{\lambda}}H^{1}(\BZ[1/p],T(k/2)) \otimes \BQ=1. 
$$
Let $\fq$ be the kernel of the $O_\lambda$-homomorphism $\Lambda \ra O_\lambda$ mapping  
$\sigma_{c}$ to $c^{-k/2}$ for any $c\in\BZ_p^\times$ (cf. \S\ref{ss:cyc}). 
Note that the map \eqref{z-sp} factors through 
$$\BH^{1}(V_{F_{\lambda}}(f))_{\fq}/\fq \BH^{1}(V_{F_{\lambda}}(f))_{\fq},$$ 
and $\BH^{2}(V_{F_{\lambda}}(f))_{\fq}$ vanishes by \eqref{van}. 
Hence Theorem \ref{KMC} implies that 
%From \eqref{van}, Theorem \ref{KMC} and descent (\cite[p. 242]{K}), 
 $z$ is an $F_{\lambda}$-basis of 
$H^{1}(\BZ[1/p],T(k/2)) \otimes \BQ$. (See also \cite[p. 242]{K}.) 

Since $H^{1}_{\rm{f}}(\BQ,V_{F_{\lambda}}(f)(k/2))$ vanishes, it follows that 
\begin{equation}\label{z-loc}
\loc_{p}(z) \notin H^{1}_{\rm{f}}(\BQ_{p},V_{F_{\lambda}}(f)(k/2))
\end{equation} 
for $\loc_{p}: H^{1}(\BQ,V_{F_{\lambda}}(f)(k/2)) \ra H^{1}(\BQ_{p},V_{F_{\lambda}}(f)(k/2))$ the localisation and $H^{1}_{\rm{f}}(\BQ_{p},V_{F_{\lambda}}(f)(k/2))$ the Bloch--Kato subgroup. 
Finally, \eqref{z-loc} and Kato's explicit reciprocity law \cite[Thm. 12.5 (1)]{K} imply that
$$
\ord_{s=k/2} L(s,f) = 0.
$$
\end{proof}
\begin{remark}
This deduction of the rank zero $p$-converse 
%(to a theorem of Gross--Zagier \cite{GZ} and Kolyvagin \cite{Ko} or  Kato \cite{K} in the non-CM case) 
from Kato's main conjecture holds for any elliptic newform.
\end{remark}
%We have the following immediate
%\begin{cor}\label{cv'}
%Let $E$ be an elliptic curve defined over an imaginary quadratic field $K$, with complex multiplication by an order of $K$. Let $p$ be a prime.
% and $\fp$ a prime of $K$ above $p$. 
%Then 
%$$
%\corank_{\BZ_{p}}\Sel_{p^{\infty}}(E_{/K})=0 \implies \ord_{s=1}L(s,E_{/K}) = 0.
%$$
%\end{cor}
%\begin{proof} 
%We may suppose that $E$ has complex multiplication by $\cO_K$. 
\vskip2mm
{\it{Proof of Theorem \ref{ThmC}}}.
Let $\psi: \BA_{K}^{\times}/K^{\times} \ra K^\times$ be the Hecke character associated to $E_{/K}$. Let $f$ and 
$\ov{f}$ be the CM newforms associated to $\psi$ and $\ov{\psi}$, the corresponding Hecke field being a subfield of $K$. Note that $L(s,E_{/K})=L(s,\psi)\cdot L(s,\ov{\psi})=L(s,f)\cdot L(s,\ov{f})$. 
In the same vein one has  
$$\Sel_{p^{\infty}}(E_{/K})^{\vee}\otimes K \simeq 
 H^{1}_{\rm{f}}(\BQ,V_{K_{\lambda}}(f)) 
\oplus  H^{1}_{\rm{f}}(\BQ,V_{K_{{\lambda}}}(\ov{f})) 
 ,$$ 
where 
%$\fp$ is a prime of $K$ above $p$, $\lambda$ the prime of $F$ below $\fp$, 
%$W_{K_{\lambda}}(f)=V_{K_{\lambda}}(f)/T_{K_{\lambda}}(f)$ for 
%$T_{F_{\lambda}}(f)\subset V_{F_{\lambda}}(f)$ a $G_\BQ$-stable lattice,  
the superscript `$\vee$' denotes the Pontryagin dual, and 
$V_{K_{\lambda}}(\cdot)=V_{F_{\lambda}}(\cdot)\otimes_{F}K$.
% and 
%$V_{K_{\ov{\lambda}}}(\ov{f})=V_{F_{\lambda}}(\ov{f})\otimes_{F}K$. 
%and $W_{F_{\ov{\lambda}}}(\ov{f})$ defined analogously.   
Hence, by Theorem \ref{pcv}, 
$$
\corank_{\BZ_{p}}\Sel_{p^{\infty}}(E_{/K})=0 \implies \ord_{s=1}L(s,E_{/K})=0.
$$

If $E$ descends to $\BQ$, then $\corank_{\BZ_{p}}\Sel_{p^{\infty}}(E_{/K})=2\cdot \corank_{\BZ_{p}}\Sel_{p^{\infty}}(E_{/\BQ})$ and $\ord_{s=1}L(s,E_{/K})=2\cdot\ord_{s=1}L(s,E_{/\BQ})$, which concludes the proof of Theorem \ref {ThmC}. 
%\end{proof}
%\begin{remark}
%In the case $p\nmid \# \cO_{K}^{\times}$, Corollary \ref{cv'} is due to Rubin (\cite{Ru}, \cite{Ru2})
%\end{remark}
\subsection{Goldfeld's conjecture}\label{ss:cp} 
%In this subsection, we consider an application of $p$-converse to a theorem of Coates--Wiles and Rubin for an analogue of the Goldfeld conjecture over imaginary quadratic fields. 
\subsubsection{Over the rationals} The proof of 
%Our approach to 
Theorem \ref{ThmA} is 
%essentially $2$-adic. It is 
partly based on the following progress towards the distribution of the associated $2^{\infty}$-Selmer ranks due to Smith \cite[Thm.~1.2]{Sm2}.
%and a $2$-converse to a theorem of Coates--Wiles and Rubin. 
%We establish the latter via Iwasawa theory of zeta elements. 
%A special case of Smith's result \cite[Cor. 1.2]{Sm2}: 

%Regarding the distributuion of $2^{\infty}$-Selmer ranks in a quadratic twist family, we have the following recent progress due to Smith (\cite[Cor. 1.2]{Sm2}).
\begin{thm}\label{ThmB}
Let $E:y^{2}=x^{3}-x$ be the congruent number elliptic curve.
Then for a density one subset of the square-free positive integers $n \equiv 1,2,3 \mod{8}$, 
%we have 
 $$
\corank_{\BZ_{2}}\Sel_{2^{\infty}}(E^{(n)}_{/\BQ})=0.
$$
%with $E^{(n)}$ being the corresponding quadratic twist \eqref{cong}. 
\end{thm}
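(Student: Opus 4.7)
The plan is to work $2$-adically: combine explicit $2$-descent with an iterative analysis of the Cassels--Tate pairings, then use analytic averaging over the family $\{E^{(n)}\}$ to force the $2^{\infty}$-Selmer corank to vanish for $100\%$ of $n\equiv 1,2,3\mod{8}$.

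First, I would carry out explicit $2$-descent on $E^{(n)}:\,ny^{2}=x^{3}-x$. Because $E[2]$ is rational, $\Sel_{2}(E^{(n)}/\BQ)$ admits a concrete description via squarefree factorisations $n=d_{1}d_{2}d_{3}$ satisfying local solvability at each bad prime. This reduces the computation of $\dims \Sel_{2}(E^{(n)}/\BQ)$ to a genus-theoretic problem on the multiplicative $\BF_{2}$-span of the primes dividing $n$, governed by Rédei-symbol conditions. As $n$ ranges over squarefree positive integers in the given residue classes mod $8$, the distribution of this dimension obeys a Cohen--Lenstra type law, following from and refining work of Heath--Brown, Tian, and Tian--Yuan--Zhang.

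Second, to pass from $\Sel_{2}$ to $\Sel_{2^{\infty}}$, I would analyse the successive Cassels--Tate pairings: the image of $\Sel_{2^{k+1}}\to \Sel_{2^{k}}$ is the kernel of a refined alternating pairing on $\Sel_{2^{k}}$, and iterating controls $\corank_{\BZ_{2}}\Sel_{2^{\infty}}(E^{(n)}/\BQ)$. The strategic target is to prove that, for $100\%$ of $n\equiv 1,2,3\mod 8$, each of these pairings is as nondegenerate as the algebraic constraints (rational $2$-torsion and $+1$ root number) permit. In this residue class the parity obstruction is trivial, and maximal nondegeneracy of the tower forces $\corank_{\BZ_{2}}\Sel_{2^{\infty}}(E^{(n)}/\BQ)=0$.

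Third, I would make the pairings combinatorially explicit via Poonen--Stoll type formulas, expressing them as $\BF_{2}$-valued matrices whose entries are Legendre, Rédei, and iterated (higher-order) Rédei symbols indexed by tuples of primes dividing $n$. The analytic core is to show that these symbol matrices equidistribute, as $n$ varies, among alternating $\BF_{2}$-forms of the permissible parity. This amounts to proving equidistribution of bilinear and higher-multilinear symbol sums over squarefree $n$ in the prescribed residue classes, via large sieve inputs and character-sum cancellation (in the spirit of \cite{HB1}, \cite{Ti}, \cite{TYZ}).

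The hardest step will be the third: the pairings at different $2$-power levels are far from independent, since the conditioning required at level $k$ (nondegeneracy on a chosen subspace) correlates with the symbols defining the pairing at level $k+1$. Decoupling these correlations calls for a fine stratification of $n$ by its prime factorisation and an ergodic-type control on the joint law of all higher symbols as $n$ varies -- essentially the mechanism of Smith's recent breakthrough. I would expect this combinatorial-analytic core to absorb almost all of the work, with the earlier descent and pairing-theoretic steps serving as structural scaffolding.
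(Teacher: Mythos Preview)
Your outline is a reasonable high-level sketch of Smith's method, but note that the paper does not prove Theorem~\ref{ThmB} at all: it is quoted as an external input, attributed to Smith \cite[Cor.~1.2]{Sm2}, and used as a black box together with the $2$-converse Theorem~\ref{ThmC} to deduce Theorem~\ref{ThmA}. So there is nothing in the paper's own argument to compare against; the relevant content lives entirely in \cite{Sm2}.

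That said, your sketch correctly identifies the architecture of Smith's proof: explicit $2$-descent giving the distribution of $\Sel_{2}$, iterated Cassels--Tate pairings to climb the tower $\Sel_{2^{k}}\to\Sel_{2^{k+1}}$, explicit formulas for these pairings in terms of higher R\'edei-type symbols, and an equidistribution/decorrelation argument for those symbols as $n$ varies. You are also right that the last step is where essentially all the difficulty lies, and that the dependence between levels is the crux. What your outline understates is just how far this goes beyond large-sieve or character-sum cancellation in the style of \cite{HB1}: Smith's decorrelation requires a new combinatorial framework (governing expansions, grids of twists, additive-combinatorial control on the joint symbol distribution) that is not a routine extension of earlier techniques. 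If you intend to reconstruct the proof rather than cite it, that machinery is the genuine gap you would have to fill.
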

%\begin{cor}\label{non-cong}
%For $100\%$ of the square-free positive integers $n \equiv 1,2,3 \mod{8}$, $n$ is non-congruent.
%\end{cor}
\begin{remark}
For earlier results, one may refer to \cite{Ti}, \cite{TYZ}, \cite{Sm1} and references therein. 
\end{remark}
%We recall that the mod $2^{m}$ Selmer rank
%\footnote{Strictly speaking, one considers mod $2^{m}$ Selmer groups modulo the image of $E^{(n)}(\BQ)[2^{m}]$.}
 %is expected to be zero for a positive proportion of $n$, but not for a density one subset.
 %the square-free positive integers $n \equiv 1,2, 3 \mod{8}$
 %but not for $100\%$. 
%  In a follow up \cite{Sm2}, the distribution of $2^{\infty}$-Selmer ranks was shown to satisfy an analogue of the Goldfeld conjecture (Theorem \ref{ThmB}). The approach departs from the consideration of toric periods and instead 
%The analysis of toric periods mod $2^{m}$ seems elusive for large $m$.
%Smith's notable approach \cite{Sm2} departs from toric periods \cite{Sm1}, and 
%instead analyses the distribution of $\Sel_{2^{m}}(E^{(n)}_{/\BQ})$ as $m$ varies \cite[Thm. 1.1]{Sm2}. 

\vskip2mm
{\it{Proof of Theorem \ref{ThmA}}}. This is a consequence of Theorem \ref{ThmB} and the $2$-converse Theorem \ref{ThmC}. 

\subsubsection{Over imaginary quadratic fields} 
%An application of the rank zero $3$-converse to the Goldfeld conjecture over imaginary quadratic fields \cite[Conj. 7.12]{KMR}:
% we have the following 
The proof of Proposition \ref{Gdiq-I} 
%is partly based on 
utilises the following 
%result
%towards the distribution of the associated $3$-Selmer ranks 
%due to Bhargava, Klagsbrun, Lemke Oliver and Shnidman 
\cite[Thm. 2.7]{BKLS}. 

\begin{thm}\label{Gdiq}
Let $E$ be an elliptic curve defined over an imaginary quadratic field $K$, with CM by an order  
of $K$. Suppose that $3$ is not inert in $K$. 
Then for at least $50\%$ of $t \in K^{\times}/(K^{\times})^{2}$, 
$$
\corank_{\BZ_{3}}\Sel_{3^{\infty}}(E^{(t)}_{/K})=0.
$$
%with $E^{(t)}_{/K}$ being the corresponding quadratic twist.
\end{thm}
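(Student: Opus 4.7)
The plan is to exploit the $3$-isogeny available on $E$ when $3$ is not inert in $K$, and perform a first-descent argument uniformly over quadratic twists. Since $E$ has CM by an order of $K$ and $3 = \mathfrak{p}\bar{\mathfrak{p}}$ (split) or $3 = \mathfrak{p}^{2}$ (ramified) in $\cO_K$, multiplication by an element of $\cO_K$ of norm $3$ yields a $K$-rational isogeny $\phi: E \to E'$ of degree $3$ with dual $\hat\phi$. Quadratic twisting by $t \in K^\times/(K^\times)^2$ preserves this isogeny structure, so we obtain $\phi_t: E^{(t)} \to E'^{(t)}$ and the standard exact sequence
$$0 \to \Sel^{\phi_{t}}(E^{(t)}/K) \to \Sel_{3}(E^{(t)}/K) \to \Sel^{\hat\phi_{t}}(E'^{(t)}/K).$$
It suffices to show that both $\Sel^{\phi_{t}}(E^{(t)}/K)$ and $\Sel^{\hat\phi_{t}}(E'^{(t)}/K)$ vanish for at least $50\%$ of $t$, since this forces $\Sel_{3}(E^{(t)}/K) = 0$ and hence $\corank_{\BZ_{3}} \Sel_{3^{\infty}}(E^{(t)}/K) = 0$.

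The next step is to give a class-field-theoretic description of these $\phi$-Selmer groups as subgroups of $K(S_{t}, \mu_{3})$ (where $S_{t}$ consists of places of bad reduction for $E^{(t)}$ together with those above $3$ and $\infty$) cut out by local conditions at each $v \in S_{t}$. Because $\phi$ is a $3$-isogeny, the local conditions at primes $v \nmid 3$ are determined combinatorially by the splitting behavior of $v$ in the division field $K(E[\phi])$, and the conditions at primes above $3$ are controlled by the (ramified or split) local structure, which is fixed independently of $t$ after a suitable parameterization of the twists by square-free ideals coprime to a fixed bad set.

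The heart of the argument is then a distribution statement: to show that the Selmer $\BF_3$-dimension is zero on a set of twists of density $\geq 1/2$, one sets up a symmetric bilinear ``Cassels--Tate-type'' pairing or, more directly, encodes $\Sel^{\phi_{t}}$ as the kernel of a matrix over $\BF_3$ whose entries are Legendre/cubic residue symbols depending on the prime factorization of $t$. Following the Heath-Brown / Fouvry--Kl\"uners / Klagsbrun--Mazur--Rubin framework adapted to cubic residue characters over $K$ (this is where the hypothesis that $3$ is not inert is essential, so that cubic residue symbols are genuinely available at primes of residue characteristic $3$), an equidistribution theorem for the ranks of these random matrices among ordered twists yields that $\dim_{\BF_{3}} \Sel^{\phi_{t}}(E^{(t)}/K) = 0$ for $100\%$ of twists in a natural archimedean/ideal ordering, and similarly for $\hat\phi_t$. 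Combining these two density-one statements gives the claimed $50\%$ (indeed density one, but $50\%$ suffices for the application).

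The main obstacle is the distribution step: controlling the joint law of the $\phi_t$ and $\hat\phi_t$ Selmer ranks as $t$ varies. The technical work amounts to proving an analytic equidistribution result for cubic residue symbols attached to the factorization of $t$, in the spirit of large sieve / bilinear form bounds. One must verify that the extra root-of-unity content at $3$ (absent in the rational $p=2$ case) does not introduce algebraic obstructions that would force a systematic rank jump on a positive-density subset of twists; this is precisely why the non-inert hypothesis on $3$ enters, since it ensures the relevant Galois module $E[\phi]$ is unramified away from a controlled set and that the cubic symbol identities needed for the matrix-rank computation are available over $K$.
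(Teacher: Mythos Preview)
The paper does not prove this theorem; it is quoted directly from \cite[Thm.~2.7]{BKLS} and used as a black box in the proof of Proposition~\ref{Gdiq-I}. Your sketch is therefore an attempt to reprove the cited result, and it departs from the actual BKLS argument. Bhargava--Klagsbrun--Lemke Oliver--Shnidman do not encode $\Sel^{\phi_t}$ as the kernel of a cubic-residue matrix, nor do they invoke Heath-Brown/Fouvry--Kl\"uners style bilinear-form equidistribution. Rather, they compute the \emph{average size} of $\Sel^{\phi_t}$ over the twist family directly from local data (the global Selmer ratio and a product over places), and then extract the $\geq 50\%$ bound from this average together with the elementary constraint $|\Sel^{\phi_t}|\in\{1,3,9,\ldots\}$.

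Your proposal has a genuine gap at the distribution step. You assert that $\dim_{\BF_3}\Sel^{\phi_t}(E^{(t)}/K)=0$ for $100\%$ of twists (and likewise for $\hat\phi_t$), but none of the frameworks you name delivers this. In the Heath-Brown, Fouvry--Kl\"uners, and Klagsbrun--Mazur--Rubin settings the Selmer rank follows a nontrivial limiting distribution with strictly positive mass at every admissible value; an equidistribution theorem for matrix ranks does not produce concentration at rank zero. Indeed, if your density-one claim were available the conclusion would be $100\%$ rather than $\geq 50\%$; the $50\%$ in \cite{BKLS} is exactly what the average-size method yields, and the gap to $100\%$ reflects the positive-density tail at higher $\phi$-Selmer ranks that their computation accommodates but cannot rule out. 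To salvage your outline you would need to determine the actual limiting distribution of $\dim_{\BF_3}\Sel^{\phi_t}$ over $K$ and bound its mass at zero from below --- substantial analytic work not indicated here, and one whose output would in any event not be $100\%$.
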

%\begin{proof}
%This is a special case of \cite[Thm. 2.7]{BKLS}.
%\end{proof}
\vskip2mm
{\it{Proof of Proposition \ref{Gdiq-I}}}. This is a consequence of Theorem \ref{Gdiq} and the $3$-converse Theorem \ref{ThmC}. 
\vskip2mm 
\begin{remark}
\begin{itemize}
\item[(i)] For any $n\in \BZ\setminus\{0\}$, the cube sum elliptic curve $E_{n}: x^{3}+y^{3}=2n$ 
 satisfies the hypotheses of Proposition \ref{Gdiq-I}, with $K=\BQ(\zeta_3)$. 
\item[(ii)] A positive proportion of the quadratic twists in Proposition \ref{Gdiq-I} do not descend to $\BQ$.
The proposition yields (perhaps) the first examples of elliptic curves $E$ 
over a number field $F$ different from $\BQ$ for which a weak form of the Goldfeld conjecture holds: the analytic rank being zero for a positive proportion of the quadratic twists of $E$ over $F$ which do not descend to $\BQ$.
\end{itemize}
\end{remark}

\end{document}